\documentclass[11pt]{amsart}
\usepackage{amssymb,amsmath}

\newcommand{\nc}{\newcommand}

\addtolength{\hoffset}{-1cm}
\addtolength{\textwidth}{2cm}

\nc{\BP}{{\mathbb P}}
\nc{\cO}{{\mathcal O}}
\nc{\cK}{{\mathcal K}}
\nc{\bD}{{\mathbb D}}
\nc{\C}{{\mathbb C}}
\nc{\Z}{{\mathbb Z}}
\nc{\spn}{\operatorname{span}}
\nc{\vareps}{\varepsilon}
\nc{\asc}{\operatorname{asc}}
\nc{\wt}{\operatorname{wt}}
\nc{\htt}{\operatorname{ht}}
\nc{\qdim}{\operatorname{qdim}}
\nc{\Cx}{{\C^{\times}}}
\nc{\cP}{{\mathcal P}}
\nc{\Gr}{{\mathsf{Gr}}}

\newtheorem{Cor}{Corollary}[section]
\newtheorem{Lem}[Cor]{Lemma}
\newtheorem{Prop}[Cor]{Proposition}
\newtheorem{Theorem}[Cor]{Theorem}

\theoremstyle{definition}
\newtheorem{Remark}[Cor]{Remark}

\begin{document}
\title{Fixed points of reverse Hessenberg convolution varieties}
\author{Joel Kamnitzer}
\email{jkamnitz@gmail.com}
\address{Department of Mathematics and Statistics \\ McGill University}

\begin{abstract}
Associated to any unit interval graph, Syu Kato introduced a variety which gives (via the geometric Satake correspondence) a graded $GL_m$ representation whose character is the chromatic quasisymmetric polynomial of the graph.  In this short note, we reprove Kato's result by analyzing the fixed points of his varieties.
\end{abstract}

\maketitle
\section{Introduction}
Let $ n $ be a natural number.  As usual we write $ [n] = \{1, \dots, n\} $.  We will study the q-analog of the chromatic symmetric polynomial for unit interval graphs on the vertex set $[n]$.  Such graphs are in bijection with various combinatorial objects, most notably Dyck paths, but for the purposes of this note, we introduce the following.

A \textbf{reverse Hessenberg function} is a weakly increasing map $ r : [n] \rightarrow \{0, \dots, n\} $ satisfying $ r(i) < i  $ for all $i$.    Associated to the reverse Hessenberg function $ r $, there is a unit interval graph $ \Gamma_r$.  The vertex set of this graph is $ [n] $ and two points $ j < i $ are connected by an edge if $ r(i) < j $.  For example, if $ r(i) = i-1 $ for all $ i $, then there are no edges in the graphs (we call this the \textbf{staircase function}).  On the other hand, if $ r(i) = 0 $ for all $ i $, then we have the complete graph.   Let $ E_r = \sum_i i - 1- r(i) $.  This is the total number of edges in the graph $ \Gamma_r $.

Let $ m $ be another natural number.  A \textbf{proper colouring} of $\Gamma_r $ is a function $ \kappa : [n] \rightarrow [m] $ such that if $ r(i) < j < i $ then $ \kappa(j) \ne \kappa(i) $.  Let $ C_r $ denote the set of proper colourings of $ \Gamma_r $.  For $ C_r $ to be non-empty, we must have $ i - r(i) \le m $ for all $ i $ and so from now on, we will assume that this condition holds.

Let  $ \kappa \in C_r $ be a proper colouring.  The \textbf{weight} of $ \kappa $,  $ \wt(\kappa) =  \vareps_{\kappa(1)} + \cdots + \vareps_{\kappa(n)} \in \Z^m$ is the integer vector which records how many of each colour was used.  As \textbf{ascent} of $ \kappa $ is a pair $ j, i \in [n] $ such that $ r(i) < j < i $ and $ \kappa(j) < \kappa(i) $.  We write $ \asc(\kappa,i) $ for the number of ascents ending at $ i $, and $ \asc(\kappa) $ for the total number of ascents.  Finally, the \textbf{height} of $ \kappa $, denoted $ \htt(\kappa) $,  is defined as $ \kappa(1) + \dots + \kappa(n) $.

Following Shareshian-Wachs \cite{SW}, we consider the chromatic quasisymmetric polynomial in $ m $ variables, which is defined as
$$
CSP_r = \sum_{\kappa \in C_r} q^{\asc(\kappa)} x^{\wt(\kappa)} \in \Z[q][x_1, \dots, x_m]^{S_m}
$$
Note that this is a truncation of their symmetric function to $ m $ variables.

Since $ \Z[x_1, \dots, x_m]^{S_m} $ is the space of characters of polynomial $ GL_m $ representations, it is natural to look for a graded $GL_m $ representation whose character is $ CSP_r $.  Kato \cite{Kato} solved this problem by constructing certain varieties which map to the affine Grassmannian and using the geometric Satake correspondence.  In this note, we will give a new proof of Kato's result by examining fixed points in his varieties.

\section{Kato's varieties}

\subsection{Reverse Hessenberg convolution space}
Let $ \cO = \C[[t]],\ \cK = \C((t)), \ G = GL_m$.  Consider the affine Grassmannian $ \Gr = G(\cK)/G(\cO)$ of $ GL_m $, which will identify with the set of $ \cO $ lattices in $ \cK^m $.  Following Kato \cite{Kato}, we define the \textbf{reverse Hessenberg convolution space} to be
$$
Y_r := \{ \cO^m = L_0 \subset L_1 \subset \cdots \subset L_n \ : \ L_i \in \Gr, \dim L_{i+1}/L_i = 1, t L_i \subset L_{r(i)} \}
$$

If $ r $ is the staircase function, then $ Y_r $ is the convolution variety $ \Gr^{\omega_1} \tilde{\times} \cdots \tilde{\times} \Gr^{\omega_1} $.   On the other extreme, if $ r(i) = 0 $ for all $ i$ (which forces $ n \le m $), then $ Y_r $ is the partial flag variety in $ \C^m $ of subspaces of dimensions $ (1, \dots, n, m) $.

Kato's varieties are iterated projective space bundles.  Given $ r $ as above, let $ \hat r $ denote the restriction of $ r $ to $ [n-1] $.  This is still a reverse Hessenberg function and we have the variety $ Y_{\hat r}$.

\begin{Prop} \label{pr:bundle}
The natural map $ Y_r \rightarrow Y_{\hat r} $ given by $ (L_0, \dots, L_n) \mapsto (L_0, \dots, L_{n-1}) $ is a bundle of projective spaces.  More precisely the fibre is $ \BP(t^{-1} L_{r(n)} / L_{n-1}) $ which is a projective space of dimension $ m - n + r(n) $.
\end{Prop}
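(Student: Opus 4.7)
The plan is to unwind the definition of $Y_r$ relative to $Y_{\hat r}$, identify the fiber as a projectivization, and then promote that pointwise identification to a projective bundle via a tautological construction.

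First I would fix a point $(L_0,\dots,L_{n-1})\in Y_{\hat r}$ and describe the fiber of the forgetful map set-theoretically. A point in the fiber is a lattice $L_n$ with $L_{n-1}\subset L_n$, $\dim L_n/L_{n-1}=1$, and $tL_n\subset L_{r(n)}$. The last condition is equivalent to $L_n\subset t^{-1}L_{r(n)}$, so (using that $L_{n-1}\subset L_{r(n)}$ fails but $tL_{n-1}\subset L_{\hat r(n-1)}\subset L_{r(n)}$ holds, hence $L_{n-1}\subset t^{-1}L_{r(n)}$) the fiber consists exactly of the hyperplanes in $t^{-1}L_{r(n)}$ containing $L_{n-1}$; i.e.\ it is $\BP(t^{-1}L_{r(n)}/L_{n-1})$, with $L_n$ recovered as the preimage in $t^{-1}L_{r(n)}$ of the chosen line in the quotient.

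Next I would compute the dimension. Since the chain $L_{r(n)}\subset L_{r(n)+1}\subset\cdots\subset L_{n-1}$ has successive quotients of dimension one, $\dim L_{n-1}/L_{r(n)}=n-1-r(n)$. For any $\cO$-lattice $L$ in $\cK^m$, multiplication by $t^{-1}$ gives $\dim_{\C}(t^{-1}L/L)=m$. Combining, $\dim(t^{-1}L_{r(n)}/L_{n-1})=m-n+1+r(n)$, and hence the projective space has dimension $m-n+r(n)$, as claimed.

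Finally I would upgrade this to a genuine projective bundle. On $Y_{\hat r}$ there are tautological families of lattices $\mathcal L_0,\dots,\mathcal L_{n-1}$; the quotient $\mathcal L_{n-1}/\mathcal L_{r(n)}$ is a vector bundle of rank $n-1-r(n)$, and $t^{-1}\mathcal L_{r(n)}/\mathcal L_{r(n)}$ is a vector bundle of rank $m$. Together these fit into a vector bundle $\mathcal V:=t^{-1}\mathcal L_{r(n)}/\mathcal L_{n-1}$ of rank $m-n+1+r(n)$ on $Y_{\hat r}$, and the pointwise identification above globalizes to an isomorphism $Y_r\cong \BP(\mathcal V)$ over $Y_{\hat r}$. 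The only delicate point, and the one I would check carefully, is that the tautological quotient is indeed locally a finite-rank vector bundle (equivalently, that the lattice chain trivializes Zariski-locally on $Y_{\hat r}$); this follows from the fact that the affine Grassmannian of $GL_m$ is an ind-scheme whose strata have a local trivialization for the tautological lattice, restricted to each finite-type stratum.
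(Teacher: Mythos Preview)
Your argument is correct and matches the paper's own proof: both identify the fiber over $(L_0,\dots,L_{n-1})$ with lines in $t^{-1}L_{r(n)}/L_{n-1}$ and compute its dimension as $m-(n-1-r(n))$; you simply add more detail on why $L_{n-1}\subset t^{-1}L_{r(n)}$ and on the global bundle structure, which the paper leaves implicit. One wording slip: you call the $L_n$'s ``hyperplanes in $t^{-1}L_{r(n)}$ containing $L_{n-1}$,'' but they are not of codimension one there---as your next clause makes clear, they correspond to \emph{lines} in the quotient $t^{-1}L_{r(n)}/L_{n-1}$.
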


\begin{proof}
The condition on $ L_n $ shows that $ L_{n-1} \subset L \subset t^{-1} L_{r(n)}$ and hence $ L / L_{n-1} $ is a 1-dimensional subspace of the vector space $ t^{-1}L_{r(n)} / L_{n-1} $ which is of dimension $ m - (n -1 - r(n))$.  Conversely, any 1-dimensional subspace of this vector space determines a lattice $ L_n $.
\end{proof}

\begin{Cor} \label{cor:dim}
The space $ Y_r $ is smooth and has dimension $ d_r := (m-1)n - E_r $.
\end{Cor}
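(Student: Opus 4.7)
The proof is a straightforward induction on $n$ using Proposition \ref{pr:bundle}. The base case $n = 0$ is immediate: $Y_r$ is a point, $E_r = 0$, and $d_r = 0$.

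For the inductive step, assume $Y_{\hat r}$ is smooth of dimension $d_{\hat r} = (m-1)(n-1) - E_{\hat r}$. By Proposition \ref{pr:bundle}, the projection $Y_r \to Y_{\hat r}$ is a Zariski-locally trivial projective space bundle with fibres of dimension $m - n + r(n)$. Since projective space bundles over smooth bases are smooth, $Y_r$ is smooth, and
$$
\dim Y_r = \dim Y_{\hat r} + (m - n + r(n)) = (m-1)(n-1) - E_{\hat r} + m - n + r(n).
$$

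The remaining task is a bookkeeping check that this equals $(m-1)n - E_r$. From the definition $E_r = \sum_{i=1}^n (i - 1 - r(i))$ we get $E_r = E_{\hat r} + (n - 1 - r(n))$, so substituting $E_{\hat r} = E_r - (n-1-r(n))$ into the expression above and simplifying yields $(m-1)n - E_r$, completing the induction. No step here is a genuine obstacle: Proposition \ref{pr:bundle} does all the geometric work, and the remainder is just arithmetic on the edge count.
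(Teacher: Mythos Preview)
Your argument is correct and is exactly the intended one: the paper leaves the corollary unproved because it follows immediately from Proposition~\ref{pr:bundle} by the induction you spell out. There is nothing to add.
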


\begin{Remark}
When $ n = m $, then $ Y_r $ is the compactification (via the well-known embedding of the nilpotent cone into the affine Grassmannian) of the following vector bundles on the flag variety:
$$
Y_r^\circ := \{ (X, V_\bullet) \in Fl_n \times \mathcal N : X V_i \subset V_{r(i)} \}
$$
In turn, this vector bundle is the orthogonal complement of the usual family of Hessenberg varieties, which was studied by Brosnan-Chow \cite{BC} and others.  The variety $ Y_r^\circ $ can be used to produce an $ S_n $ representation using Springer theory.  From our perspective, the main advantages of Kato's varieties $ Y_r $ are that they can be used to produce $ GL_m $ representations and that they have many torus fixed points, which we study below.
\end{Remark}

\subsection{Fixed points and colourings}

We have an action of $ G(\cO) $ on $ Y_r $ and in particular, we can consider the torus $ T \subset G(\cO) $.  Recall that the $ T $-fixed points in $ \Gr $ are indexed by the coweight lattice $ \Z^m $.  For our purposes, we will choose the convention that $ L_\mu := \spn_{\cO}(t^{-\mu_1} e_1, \dots, t^{-\mu_n} e_n) $ for $ \mu \in \Z^m $.

Thus each fixed point in $ Y_r $ is given by a sequence $ L_0 \subset L_{\mu(1)} \subset \cdots \subset L_{\mu(m)} $ where $ \mu(i) \in \Z^m $ for each $ i$.  The condition that $ L_{\mu({i-1})} \subset L_{\mu(i)} $ with 1-dimensional quotient forces $ \mu(i)  = \mu({i-1}) + \vareps_{\kappa(i)} $ for some $ \kappa(i) \in [m] $ (here $ i = 1, \dots, n$ and $ \mu(0) = 0 $).  Thus a $T$-fixed point is encoded by a function $ \kappa : [n] \rightarrow [m] $, which we think of a colouring of the graph $ \Gamma_r $.

\begin{Lem}
Given a proper colouring $ \kappa $ of $ \Gamma_r $, let $ \mu(i) = \vareps_{\kappa(1)} + \cdots + \vareps_{\kappa(i)} $ for $ i = 1, \dots, n$.
This defines a bijection between $C_r$ and $ Y_r^T $.
\end{Lem}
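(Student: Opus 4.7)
The approach is to verify that the map $\kappa \mapsto (L_0, L_{\mu(1)}, \dots, L_{\mu(n)})$ described in the statement is well-defined into $Y_r^T$, and that it inverts the encoding of $T$-fixed points by functions $\kappa : [n] \to [m]$ that was sketched in the paragraph preceding the lemma. That paragraph already establishes that any $T$-fixed point of $Y_r$ must be of the form $L_0 \subset L_{\mu(1)} \subset \cdots \subset L_{\mu(n)}$ with $\mu(i) - \mu(i-1) = \vareps_{\kappa(i)}$ for a unique $\kappa(i) \in [m]$, so the only real content is to pin down exactly which $\kappa$ satisfy the remaining condition $t L_{\mu(i)} \subset L_{\mu(r(i))}$ for all $i$.

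The first step is to translate this lattice condition into coordinates. For any $\mu, \nu \in \Z^m$, reading off the generators $t^{-\mu_k} e_k$ of $L_\mu$ shows that $tL_\mu \subset L_\nu$ if and only if $\mu_k \le \nu_k + 1$ for every $k \in [m]$. Applied with $\mu = \mu(i)$ and $\nu = \mu(r(i))$, membership in $Y_r$ becomes the requirement that $\mu(i)_k - \mu(r(i))_k \le 1$ for all $i$ and all $k$.

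The second step is a direct combinatorial interpretation. By construction $\mu(i)_k = \#\{\,j \le i : \kappa(j) = k\,\}$, so the difference $\mu(i)_k - \mu(r(i))_k$ counts the number of $j$ with $r(i) < j \le i$ and $\kappa(j)=k$. The inequality $\le 1$ for every $k$ therefore says that $\kappa$ is injective on the interval $\{r(i)+1, \dots, i\}$, for each $i$. I then check that this family of injectivity conditions is equivalent to $\kappa$ being a proper colouring of $\Gamma_r$: given any edge $j_1 < j_2$ of $\Gamma_r$, by definition $r(j_2) < j_1$, and taking $i = j_2$ places $j_1, j_2$ in the interval $\{r(j_2)+1,\dots,j_2\}$, forcing $\kappa(j_1) \ne \kappa(j_2)$; conversely, given any $j_1 < j_2$ in $\{r(i)+1, \dots, i\}$, the monotonicity of $r$ gives $r(j_2) \le r(i) < j_1$, so $\{j_1, j_2\}$ is an edge and the proper colouring condition yields $\kappa(j_1) \ne \kappa(j_2)$.

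The step I expect to require the most care is this final equivalence, because the proper colouring condition is only phrased in terms of pairs $(j,i)$ with $r(i)<j<i$, whereas Step 2 produces the apparently stronger statement that $\kappa$ is injective on each window $\{r(i)+1,\dots,i\}$. The weak monotonicity of $r$ is what reconciles them, and it is the one place where the hypothesis that $r$ is a reverse Hessenberg function (rather than an arbitrary function with $r(i)<i$) is actually used.
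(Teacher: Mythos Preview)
Your proof is correct and follows essentially the same route as the paper: translate $tL_{\mu(i)} \subset L_{\mu(r(i))}$ into the coordinate inequalities $\mu(i)_p \le \mu(r(i))_p + 1$, interpret these as injectivity of $\kappa$ on each window $\{r(i)+1,\dots,i\}$, and identify this with the proper colouring condition. You supply more detail than the paper does---in particular your explicit use of the monotonicity of $r$ to reconcile window-injectivity with the edge-by-edge definition of proper colouring is left implicit in the paper's one-line assertion---but the argument is the same.
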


\begin{proof}
Given a point $ L_0 \subset L_{\mu(1)} \subset \cdots \subset L_{\mu(m)} $ as above, we note that $ t L_i \subset L_{r(i)} $ if and only if for all $p \in [m] $, we have $ \mu(i)_p \le \mu(r(i))_p + 1$.  This is equivalent to the condition that $ \kappa(j) \ne \kappa(j') $ for all $ j, j' \in \{r(i) +1, \dots, i \}$.  Thus $ \kappa $ gives a point in $ Y_r $ if and only if it is a proper colouring.
\end{proof}

Given $ \kappa \in C_r $, we will simply write $ \kappa :=(L_0, L_{\mu(1)}, \dots, L_{\mu(n)}) $, where $ \mu(i) =  \vareps_{\kappa(1)} + \cdots + \vareps_{\kappa(i)} $, for the resulting fixed point in $ Y_r $.  For any such $ \kappa $, we can consider the Bialynicki-Birula cell  $ A_\kappa = \{ y \in Y_r : \lim_{s \to \infty} s y = \kappa \} $, where we have fixed $ \Cx \subset T $ by $ s \mapsto (s, s^2, \dots, s^n) $.  Since $ Y_r $ is smooth and projective, each $ A_\kappa $ is an affine space and these give an affine paving of $ Y_r$.

\begin{Lem} \label{le:asc}
$A_\kappa = \C^d $ where $ d = \htt(\kappa) - \asc(\kappa) - n$
\end{Lem}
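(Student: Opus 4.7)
The plan is to prove the lemma by induction on $n$, leveraging the projective bundle structure from Proposition \ref{pr:bundle}. The base case $n=0$ is trivial. For the inductive step, let $\hat\kappa = \kappa|_{[n-1]}$, which is a proper colouring in $C_{\hat r}$ whose corresponding fixed point in $Y_{\hat r}$ is the image of $\kappa$ under the bundle map $Y_r \to Y_{\hat r}$. This map is $\mathbb{C}^\times$-equivariant, so $A_\kappa$ fibers over $A_{\hat\kappa}$ with fiber equal to the Bialynicki-Birula cell of the fixed point $\kappa$ inside the projective space $\BP(t^{-1} L_{\mu(r(n))}/L_{\mu(n-1)})$. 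By induction $A_{\hat\kappa} \cong \C^{\hat d}$ with $\hat d = \htt(\hat\kappa) - \asc(\hat\kappa) - (n-1)$, and since $Y_r$ is smooth projective the total cell $A_\kappa$ is automatically an affine space; it remains to compute the fiber dimension.

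For the fiber computation, I would decompose $t^{-1} L_{\mu(r(n))}/L_{\mu(n-1)}$ into weight spaces for the chosen cocharacter. The quotient has a natural basis given by the images of $t^{-\mu(r(n))_p - 1} e_p$ for those $p \in [m]$ with $\mu(n-1)_p = \mu(r(n))_p$. Equivalently, since properness forces $\mu(n-1)_p - \mu(r(n))_p \in \{0,1\}$, the basis is indexed by the set $P$ of colours $p$ \emph{not} appearing in $\{\kappa(r(n)+1), \dots, \kappa(n-1)\}$; note $\kappa(n) \in P$. Each basis vector has weight $p$, and the fixed point $\kappa$ in the fiber corresponds to the line of weight $\kappa(n)$. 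By the standard description of BB cells in projective space (the cell at a weight-$w_0$ fixed line, for the $s \to \infty$ convention, is parametrized by components of strictly smaller weight), the fiber cell has dimension $|\{p \in P : p < \kappa(n)\}|$.

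A short combinatorial identity finishes the count: among $\{1, \dots, \kappa(n)-1\}$, the colours \emph{not} in $P$ are precisely the values $\kappa(j)$ for $j \in \{r(n)+1, \dots, n-1\}$ with $\kappa(j) < \kappa(n)$. Since these values are pairwise distinct by properness, their count equals $\asc(\kappa, n)$. Hence the fiber has dimension $(\kappa(n)-1) - \asc(\kappa,n)$. Adding this to $\hat d$ and using $\htt(\kappa) = \htt(\hat\kappa) + \kappa(n)$ and $\asc(\kappa) = \asc(\hat\kappa) + \asc(\kappa, n)$ yields exactly
\[
\dim A_\kappa = \htt(\kappa) - \asc(\kappa) - n.
\]

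The main obstacle is really just bookkeeping: verifying the sign convention (that $\lim_{s \to \infty}$ selects strictly-smaller-weight directions in the fiber projective space, not the opposite), and correctly identifying which weight vectors survive to the quotient. Once the set $P$ is seen to match the "unused colours at position $n$" and the weights are read off correctly, the dimension formula drops out.
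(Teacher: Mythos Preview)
Your argument is correct and is essentially the same as the paper's own proof: both exploit the projective bundle structure of Proposition~\ref{pr:bundle} to reduce to a recursive/inductive computation, identify the weights on the fiber $\BP(t^{-1}L_{\mu(r(n))}/L_{\mu(n-1)})$ as the set $[m]\setminus\{\kappa(r(n)+1),\dots,\kappa(n-1)\}$, and count those $p<\kappa(n)$ to obtain $\kappa(n)-1-\asc(\kappa,n)$. The only cosmetic difference is that the paper phrases the fiber step in terms of the negative weight subspace of the tangent space $(L_{\mu(n)}/L_{\mu(n-1)})^*\otimes(t^{-1}L_{\mu(r(n))}/L_{\mu(n)})$, whereas you describe the Bialynicki--Birula cell in the projective space directly; these are equivalent formulations of the same count.
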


\begin{proof}
We consider the action of $ \Cx $ on the tangent space $ T_\kappa Y_r $.  Let $T^-_\kappa Y_r $ be the direct sum of negative weight spaces for the action of $ \Cx $.  By the general theory, we have $ d = \dim T^-_\kappa Y_r$.

We calculate $ \dim T^-_{\kappa} Y_r $ recursively, exploiting the projective bundle structure from Proposition \ref{pr:bundle}.  Let $ \hat r, \hat \kappa $ denote the restrictions to $[n-1] $.  We see that $$ \dim T^-_\kappa Y_r = \dim T^-_{\hat \kappa} Y_{\hat r}  + \dim T^-_{L_{\mu(n)}} \BP(t^{-1} L_{\mu(r(n))} / L_{\mu(n-1)}))$$
Now the tangent space to the projective space is given by $$ (L_{\mu(n)} / L_{\mu({n-1})})^* \otimes (t^{-1}L_{\mu(r(n))} / L_{\mu(n)})$$ which has weights $ - \kappa(n) + p $ where $ p \in [m] \setminus \{ \kappa(r(n) +1), \dots, \kappa(n) \} $.  In particular the dimension of the negative part of this tangent space is the number of $ p \in [m] \setminus \{ \kappa(r(n) +1), \dots, \kappa(n) \} $ such that $ p < \kappa(n) $.  Thus the dimension of this negative part is $ \kappa(n) - 1 - \asc(\kappa,n) $ and so the result follow.
\end{proof}

\subsection{Analysis of push forward}

We have a natural morphism $ m : Y_r \rightarrow \Gr $ given by $ (L_0, \dots, L_n) \mapsto L_n $.  This map is not in general semismall.  The decomposition theorem still tells us that the image of the constant sheaf is a direct of shifted semisimple perverse sheaves.  Since $ G(\cO) $ acts on $ Y_r $ compatibly with its action of $ \Gr$, these perverse sheaves must all lie in the Satake category $ \operatorname{Perv}_{G(\cO)}(\Gr) $ which is a semisimple category with simple objects $ IC(\lambda) $, the IC sheaf of $ \overline \Gr^\lambda $.  Hence we find that
\begin{equation} \label{eq:push}
m_* \C_{Y_r} [d_r] = \oplus_\lambda IC(\lambda) \otimes M(\lambda)
\end{equation}
where $ M(\lambda) $ is a $ \Z$-graded multiplicity vector space, where the grading reflects the cohomological shifts.  Since $ Y_r $ is smooth and $ m $ is proper, the right hand side is invariant under Verdier duality and so each multiplicity space $ M(\lambda) $ is palindromic: $ \dim M(\lambda)_i = \dim M(\lambda)_{-i}$.    Given a $\Z$-graded vector space $ V $, we write $ \qdim V = \sum_{i \in \Z} \dim V_{i}  q^{i/2} \in \Z[q^{1/2},q^{-1/2}] $.

Under the geometric Satake correspondence $ m_* \C_{Y_r}[d_r] $ corresponds to $ \oplus_\lambda V(\lambda) \otimes M(\lambda) $.  Since each $ M(\lambda) $ is $ \Z$-graded, we can regard the RHS as a $ \Z$-graded representation of $ GL_m $, and thus its graded character is $ \sum_\lambda \qdim M(\lambda) s_\lambda $.  Here $ V(\lambda) $ is the polynomial representation of $ GL_m $ with highest weight $ \lambda $,  $ s_\lambda \in \C[x_1, \dots, x_m]^{S_m} $ is its character, the Schur polynomial.

The following result is our main theorem, which was proven by Kato \cite[Theorem A]{Kato} using the modular law for chromatic symmetric functions.

\begin{Theorem} \label{th:main}
We have an equality $ CSP_r = q^{\frac{E_r}{2}} \sum_\lambda \qdim M(\lambda) s_\lambda $.
\end{Theorem}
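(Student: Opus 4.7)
The plan is to match the coefficient of $x^\mu$ on both sides of the identity for each $\mu \in \Z^m$. The left-hand coefficient is $\sum_{\wt(\kappa) = \mu} q^{\asc(\kappa)}$ by definition of $CSP_r$, while the right-hand coefficient is $q^{E_r/2}$ times $\sum_\lambda \dim V(\lambda)_\mu \cdot \qdim M(\lambda)$, which by geometric Satake equals $q^{E_r/2}$ times the graded dimension of the $\mu$-weight space of the Satake fibre functor applied to $m_* \C_{Y_r}[d_r]$.

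To compute this $\mu$-weight space geometrically, I would invoke Mirkovi\'c--Vilonen: for a perverse sheaf $\mathcal P$ on $\Gr$, the $\mu$-weight is realized as $H^{-2\rho(\mu)}_c(S_\mu, \mathcal P)$, where $S_\mu \subset \Gr$ is the semi-infinite cell $\{L : \lim_{s \to \infty} s \cdot L = L_\mu\}$ attracting $L_\mu$ under our cocharacter $s \mapsto (s, \ldots, s^n)$. Proper base change then gives $H^\bullet_c(S_\mu, m_* \C_{Y_r}[d_r]) = H^{\bullet + d_r}_c(m^{-1}(S_\mu), \C)$, and since $y \in A_\kappa$ has $\lim_{s \to \infty} s \cdot m(y) = L_{\wt(\kappa)}$, the preimage decomposes as $m^{-1}(S_\mu) = \bigsqcup_{\wt(\kappa) = \mu} A_\kappa$. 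Each $A_\kappa$ is affine of dimension $\htt(\kappa) - \asc(\kappa) - n$ by Lemma \ref{le:asc}, and the closure filtration of these attracting cells on the smooth projective variety $Y_r$ restricts to give $H^{2k}_c(m^{-1}(S_\mu), \C) = \bigoplus_{\kappa : \wt(\kappa) = \mu,\ \dim A_\kappa = k} \C$.

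Assembling the pieces, the graded dimension in question equals $q^{-\rho(\mu) + d_r/2} \sum_{\wt(\kappa) = \mu} q^{-\dim A_\kappa}$. Substituting $\dim A_\kappa = \htt(\kappa) - \asc(\kappa) - n$ (with $\htt(\kappa)$ depending only on $\mu$), and applying the identity $\rho(\mu) + \htt(\mu) = (m+1)n/2$ (immediate from $\rho = \tfrac{1}{2}(m-1, m-3, \ldots, -(m-1))$) together with $d_r = (m-1)n - E_r$, the exponent collapses to $-E_r/2$, yielding
$$\sum_\lambda \dim V(\lambda)_\mu \cdot \qdim M(\lambda) = q^{-E_r/2} \sum_{\wt(\kappa) = \mu} q^{\asc(\kappa)}.$$
Multiplying both sides by $q^{E_r/2} x^\mu$ and summing over $\mu$ then recovers the theorem.

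The main obstacle is bookkeeping the degree conventions so that the Mirkovi\'c--Vilonen shift $-2\rho(\mu)$, the perverse shift $[d_r]$, and the Bialynicki-Birula cell dimensions combine cleanly into $q^{-E_r/2}$ with no leftover $\mu$-dependence; a secondary technical point is justifying that the BB paving of $m^{-1}(S_\mu)$ yields the claimed direct-sum decomposition of compactly supported cohomology via the standard collapsing argument for attracting cells in a smooth projective variety.
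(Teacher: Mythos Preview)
Your proposal is correct and follows essentially the same route as the paper: apply the Mirkovi\'c--Vilonen weight functor to both sides of the decomposition $m_*\C_{Y_r}[d_r]=\bigoplus_\lambda IC(\lambda)\otimes M(\lambda)$, use base change to identify the result with the (co)homology of $m^{-1}(S^\mu)=\bigsqcup_{\wt(\kappa)=\mu}A_\kappa$, count cells via Lemma~\ref{le:asc}, and then do exactly the exponent simplification you describe. The only cosmetic difference is that the paper phrases the weight functor via $j_\mu^!$ and Borel--Moore homology rather than $j_\mu^*$ and compactly supported cohomology, which amounts to the same cell count after the duality $\C_{Y_r}[d_r]=\bD_{Y_r}[-d_r]$.
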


In order to prove this theorem, it will be necessary to recall the weight functors of Mirkovic-Vilonen \cite{MV}.  Let $ \mu \in \Z^m$ and consider the semiinfinite orbit $$ S^\mu  = \{ L \in \Gr : \lim_{s \to \infty} L = L_\mu \} = N(\cK) L_\mu$$  Let $ j_\mu : S^\mu \rightarrow \Gr $ be the inclusion and let $ 2\rho = (m-1, \dots, 1-m) $.  The following result is an important part of the geometric Satake correspondence.

\begin{Theorem}
For any $ \cP \in \operatorname{Perv}_{G(\cO)}(\Gr) $, $ H^\bullet(S^\mu, j_\mu^!(\cP))$ is concentrated in degree $ 2\rho(\mu) $.  Moreover, if $ \cP $ corresponds to $ V \in \operatorname{Rep}(GL_m) $ under the geometric Satake correspondence, then we have an identification $ H^{2\rho(\mu)}(S^\mu, j_\mu^!(\cP)) = V_\mu $.
\end{Theorem}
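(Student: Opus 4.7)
The plan is to prove two statements separately: concentration of $H^\bullet(S^\mu, j_\mu^!\cP)$ in degree $2\rho(\mu)$ (equivalently, t-exactness of the functor $F_\mu := H^{2\rho(\mu)}(S^\mu, j_\mu^!-)$ on $\operatorname{Perv}_{G(\cO)}(\Gr)$), and then the identification $F_\mu(\cP) = V_\mu$ as a weight space under Satake.

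For concentration I would use Braden's hyperbolic-localization theorem applied to a generic dominant cocharacter $\chi \colon \Cx \to T$, chosen so that the attracting set of $L_\mu$ in $\Gr$ is exactly $S^\mu$ and the repelling set is the opposite semi-infinite orbit $T^\mu = N^-(\cK) L_\mu$. Braden's theorem provides a canonical isomorphism between the $!$-pushforward of $j_\mu^*\cP$ from $S^\mu$ and the $*$-pushforward of $j_\mu^!\cP$ from $T^\mu$, viewed at the fixed point $L_\mu$, and shows this hyperbolic-localization functor is t-exact on equivariant perverse sheaves. To pin the single non-vanishing degree at $2\rho(\mu)$, I would invoke the classical Mirkovic-Vilonen dimension estimates $\dim(\overline{\Gr^\lambda} \cap S^\mu) \le \rho(\lambda + \mu)$ and $\dim(\overline{\Gr^\lambda} \cap T^\mu) \le \rho(\lambda - \mu)$: on each stratum $\Gr^\nu \subset \overline{\Gr^\lambda}$, the $!$-restriction of $IC(\lambda)$ has its cohomology sheaves concentrated in degrees $\ge -\dim \Gr^\nu$, and the dimension bound on $\Gr^\nu \cap S^\mu$ then forces $H^k(S^\mu, j_\mu^! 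IC(\lambda)) = 0$ for $k > 2\rho(\mu)$; the symmetric vanishing for $k < 2\rho(\mu)$ follows from Verdier duality and the companion bound on $T^\mu$.

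Once concentration is established, $F_\mu$ is exact, so $F := \bigoplus_\mu F_\mu$ is an exact fiber functor on $\operatorname{Perv}_{G(\cO)}(\Gr)$. I would then identify $F$ with the total cohomology $H^\bullet(\Gr, -)$ via the spectral sequence of the stratification by the $\{S^\mu\}$, which degenerates precisely because of concentration. Under geometric Satake, $H^\bullet(\Gr, -)$ is the forgetful functor $\operatorname{Rep}(GL_m) \to \operatorname{Vect}$, and the ambient $T$-action on $\Gr$ refines it into a weight-space decomposition indexed by $\Z^m$. Since $F_\mu$ lives in $T$-weight $\mu$ (built into the characterization of $S^\mu$ via the $\Cx$-limit to $L_\mu$), it must coincide with the weight-$\mu$ summand, giving $F_\mu(\cP) = V_\mu$ as claimed.

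The main obstacle is the dimension estimate $\dim(\overline{\Gr^\lambda} \cap S^\mu) \le \rho(\lambda + \mu)$, which is the deep geometric content of MV. In type $A$ this can be established combinatorially via the lattice description of $\Gr$ by producing explicit transverse slices of the expected codimension through generic points, or by reduction to rank one through Beilinson-Drinfeld factorization of the affine Grassmannian. The matching equality (not just the inequality) is what ultimately yields $\dim F_\mu(IC(\lambda)) = \dim V(\lambda)_\mu$ by comparison with the Weyl character formula, and constructing these slices is the substantive step that makes the whole argument go through.
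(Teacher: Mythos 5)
First, a structural point: the paper does not prove this statement at all. It is recalled from Mirkovic--Vilonen \cite{MV} (their weight-functor theorem) and used as a black box in the proof of Theorem \ref{th:main}, so there is no proof in the paper to compare yours against; the honest comparison is with the argument in \cite{MV} itself. Your outline does follow that argument's architecture --- Braden's hyperbolic localization plus the dimension estimates for $\overline{\Gr^\lambda} \cap S^\mu$ for the concentration, and the identification of $\bigoplus_\mu F_\mu$ with $H^\bullet(\Gr,-)$ via the semi-infinite stratification for the weight-space statement --- so the approach is the right one. However, two of your steps, as written, would fail.

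The vanishing argument is assembled backwards. The cosupport condition puts the cohomology sheaves of $j_\mu^! IC(\lambda)$ on a stratum $\Gr^\nu$ in degrees $\ge -\dim\Gr^\nu$, and a lower bound on the degrees of a complex can only bound its hypercohomology from below; it can never force $H^k(S^\mu, j_\mu^! IC(\lambda)) = 0$ for $k > 2\rho(\mu)$. (Killing cohomology above a threshold requires an upper bound on sheaf degrees plus cohomological dimension, and $j^!$ of a perverse sheaf carries no useful upper bound.) In \cite{MV} the upper vanishing is proved on the other localization: the support condition puts $j_\mu^*\cP$ in degrees $\le -\dim\Gr^\nu$, compactly supported cohomology vanishes above twice $\dim(\Gr^\nu \cap S^\mu)$, and the dimension estimate makes the total threshold exactly the critical degree; the lower vanishing comes from Verdier duality, which stays on the same orbit, exchanging $(j^!, H^\bullet)$ with $(j^*, H_c^\bullet)$ applied to $\bD\cP$ --- contrary to what you write, duality does not move you to $T^\mu$; that is Braden's job. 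Braden's isomorphism is then precisely what converts two one-sided vanishings into two-sided concentration; note also that as you have oriented it, your Braden isomorphism relates $H_c^\bullet(S^\mu, j_\mu^*\cP)$ to $H^\bullet(T^\mu, j^!\cP)$, neither of which is the quantity in the statement, and Braden's theorem does not by itself ``show the hyperbolic-localization functor is t-exact'' --- that t-exactness is equivalent to the concentration you are trying to prove.

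The identification $F_\mu(\cP) = V_\mu$ also cannot be obtained the way you propose. Since $\cP$ is $G(\cO)$-equivariant and $T$ is connected, $T$ acts trivially on $H^\bullet(\Gr, \cP)$: the ``ambient $T$-action'' provides no weight decomposition of cohomology whatsoever. The weight decomposition in the statement is with respect to the maximal torus of the \emph{dual} group, and in \cite{MV} the grading $H^\bullet(\Gr, -) = \bigoplus_\mu F_\mu$ by semi-infinite orbits is the \emph{definition} of that torus action: the dual group is produced by Tannakian reconstruction from this graded fiber functor, so $F_\mu = (\cdot)_\mu$ holds essentially by construction. If one instead takes the Satake equivalence as given (as this paper does), the statement is a normalization compatibility between the chosen equivalence and the semi-infinite grading, which must be checked rather than deduced from equivariance. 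Finally, as you acknowledge, the dimension estimates and Braden's theorem remain unproven inputs, so even with the above repaired, the proposal is an outline of \cite{MV} rather than a self-contained proof --- which, to be fair, is exactly the level at which the paper itself treats the result.
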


\begin{Remark}
The attentive reader will have noticed that $ L_\mu $ corresponds to the usual point $ t^{-\mu} $ of the affine Grassmannian and thus $ S^\mu $ should really be denoted $ S^{-\mu} $.  If desired, we can return to the standard conventions by changing the definition of $ Y_r $ to a set of decreasing flags of lattices $ L_0 \supset \cdots \supset L_n $, but we find this alternate version less intuitive.
\end{Remark}

\begin{proof}[Proof of Theorem \ref{th:main}]
Applying $ H^{2\rho(\mu) + \bullet}(S^\mu, j_\mu^!(-)) $ to both sides of (\ref{eq:push}) and applying Theorem \ref{th:main} to the RHS yields
\begin{equation} \label{eq:push2}
H^{2\rho(\mu) + \bullet}(S^\mu, j_\mu^! m_* \C_{Y_r}[d_r]) = \oplus_\lambda V(\lambda)_\mu \otimes M(\lambda)
\end{equation}
To compute the left hand side we consider $ m^{-1} (S^\mu) \subset Y_r $ and we use that $ \C_{Y_r}[d_r] = \bD_{Y_r}[-d_r] $.  Then base change gives
$$ H^{2\rho(\mu) + \bullet}(S^\mu, j_\mu^! m_* \C_{Y_r}[d_r]) = H^{2\rho(\mu) + \bullet}(m^{-1} (S^\mu), \bD_{m^{-1}(S^\mu)}[-d_r]) = H_{ - 2\rho(\mu)+d_r - \bullet}(m^{-1}(S^\mu)) $$
where on the right we have the Borel-Moore homology.

Next since $ m $ is $ \Cx $-equivariant, we have that
$$ m^{-1} (S^\mu) = \{ y \in Y_r : m(\lim_{s \to \infty} s_y) = L_\mu \} = \bigcup_{\substack{\kappa \in C_r \\ wt(\kappa) = \mu}} A_\kappa $$
and thus $ H_{2p}(m^{-1} (S^\mu)) = | \{\kappa \in C_r : \dim A_\kappa = p , \wt(\kappa) = \mu \} |.$

Combining together the last few equations and Lemma \ref{le:asc}, we find that
$$
\qdim H^{2\rho(\mu) + \bullet}(S^\mu, j_\mu^! m_* \C_{Y_r}[d_r]) = \sum_{\substack{\kappa \in C_r \\ \wt(\kappa) = \mu}} q^{\asc(\kappa) - \htt(\kappa) +n - \rho(\mu) + d_r/2}
$$

Finally, we note that
$$ \rho(\mu) = \frac{1}{2} \sum_k m - 2c(k) +1 = \frac{n(m+1)}{2} - \htt(\kappa) $$
and hence via Corollary \ref{cor:dim},
$
\asc(\kappa) - \htt(\kappa) +n - \rho(\mu) + d_r/2 = \asc(\kappa) - E_r/2.
$
So,
$$
\qdim H^{2\rho(\mu) + \bullet}(S^\mu, j_\mu^! m_* \C_{Y_r}[d_r]) = q^{-\frac{E_r}{2}} \sum_{\kappa : \wt(\kappa) =\mu} q^{\asc(\kappa)}
$$

Thus taking graded dimension of both sides of (\ref{eq:push2}) yields
$$
q^{-\frac{E_r}{2}} \sum_{\kappa \in C_r} q^{\asc(\kappa)} =  \sum_\lambda \qdim M(\lambda) \dim V(\lambda)_\mu
$$
which implies the desired statement.
\end{proof}

\subsection*{Acknowledgements}
I thank Pierre Baumann, Patrick Brosnan, Mathieu Guay-Paquet, Syu Kato, Antoine Labelle, and Alejandro Morales for helpful conversations.

\end{document}